\documentclass{article}
\usepackage{graphicx} 
\usepackage{fullpage}
\usepackage{amssymb, amsmath}
\usepackage{amsthm} 
\usepackage{hyperref}
\usepackage{color}

\usepackage{tikz}
\usetikzlibrary{arrows.meta,positioning}

\newtheorem{theorem}{Theorem}
\newtheorem{lemma}[theorem]{Lemma}

\theoremstyle{definition}

\newtheorem*{theorem*}{Theorem}
\newtheorem*{lemma*}{Lemma}

\newtheorem{claim}{Claim}
\newenvironment{proofc}{\begin{proof}[Proof of Claim]}{\end{proof}}

\newcommand\ex{\ensuremath{\mathrm{ex}}}
\newcommand\EX{\ensuremath{\mathrm{EX}}}

\title{On the Tur\'an number of the directed path}
\author{Daniel P. Johnston\thanks{Department of Mathematics, Trinity College, Hartford, CT. E-mail: \texttt{daniel.johnston@trincoll.edu}.} \qquad Cory Palmer\thanks{Department of Mathematical Sciences, University of Montana, Missoula, MT. E-mail: \texttt{cory.palmer@umontana.edu}. Supported by NSF grant DMS-2503179 and a grant from the Simons Foundation [SFI-MPS-TSM-00013277, CP].} \qquad Amites Sarkar\thanks{Department of Mathematics, Western Washington University, Bellingham, WA. E-mail: \texttt{sarkara@wwu.edu}.}}

\begin{document}

\maketitle

\begin{abstract}
    In this note we determine the maximum number of arcs in a digraph on $n$ vertices that does not contain a length-$k$ directed path $\overrightarrow{P}_{k+1}$ for $n \geq 50k^6$. This improves a theorem of Zhou and Li [\emph{Graphs
Combin.} 39(3), 2023] who proved the result with a superexponential threshold on $n$.
\end{abstract}

\section{Introduction}

One of the oldest results in extremal graph theory is the Erd\H os-Gallai theorem~\cite{EG} on the extremal number for paths. Erd\H os and Gallai proved, in 1959, that ${\rm ex}(n,P_{k+1})$ (the maximum number of edges in an undirected graph $G$ on $n$ vertices containing no path $P_{k+1}$ on $k+1$ vertices) is at most $\tfrac12(k-1)n$, with equality if and only if $n=kt$, and $G$ is the disjoint union of $t$ copies of $K_k$. This theorem was extended in 1975 by Faudree and Schelp~\cite{FS}, who determined ${\rm ex}(n,P_{k+1})$, together with all the extremal graphs, for all $n$ and $k$.

\medskip

It's natural to ask the same question for {\it oriented} graphs, in which the path $P_{k+1}$ is replaced by an oriented path $\overrightarrow{P}_{k+1}$. This somewhat similar question can be easily answered by applying two even older theorems, and the extremal construction is completely different: it's the transitive T\'uran digraph $\overrightarrow{T}_{n,k}$. To define $\overrightarrow{T}_{n,k}$, we start with the $k$-partite T\'uran graph $T_{n,k}$ on $n$ vertices, and an arbitrary ordering of the parts $V_1,\ldots,V_k$. We then direct an edge between $V_i$ and $V_j$, where $i<j$, so that it points from $V_i$ to $V_j$ (see Figure~\ref{turan-fig}). Strictly speaking, when $k \nmid n$, this creates a class of oriented graphs, but, since they all have the same number of arcs, we will denote any one of them with the symbol $\overrightarrow{T}_{n,k}$. Now for the proof. Suppose an oriented graph ${G}$ contains more arcs than $\overrightarrow{T}_{n,k}$. By T\'uran's theorem, ${G}$ contains a tournament, i.e., a complete oriented graph, on $k+1$ vertices. But by R\'edei's theorem~\cite{Redei}, this tournament contains a directed Hamilton path $\overrightarrow{P}_{k+1}$.

\begin{figure}[ht]
\centering
\begin{tikzpicture}[
    scale=0.95,
    v/.style={circle,draw,fill=white,minimum size=2pt,inner sep=0pt},
    cls/.style={circle,draw,minimum size=1cm},
    arr/.style={-{Latex[length=2mm]},thick},
    lab/.style={font=\small}
]

    \node[cls] (V0) at (0,0) {$V_1$};
    \node[cls] (V1) at (2,0) {$V_2$};
    \node[lab] (mid) at (4,0) {\huge $\cdots$};
    \node[cls] (Vk1) at (6,0) {$V_{k-1}$};
    \node[cls] (Vk) at (8,0) {$V_{k}$};

    \draw[arr] (V0) -- (V1);
    \draw[arr] (V1) -- (mid);
    \draw[arr] (mid) -- (Vk1);
    \draw[arr] (Vk1) -- (Vk);
    \draw[arr,bend left=33] (V0.north) to (Vk.north);
    \draw[arr,bend left=33] (V0.60) to (Vk1.north);
    \draw[arr,bend left=38] (V0.35) to ([xshift=-8pt]mid.north);
    \draw[arr,bend left=33] (V1.north) to (Vk.120);
    \draw[arr,bend left=33] (V1.60) to (Vk1.120);
    \draw[arr,bend left=38] ([xshift=8pt]mid.north) to (Vk.145);

\end{tikzpicture}
\caption{A transitive Tur\'an digraph.}\label{turan-fig}
\end{figure}

The next natural question is for {\it digraphs}, which can have arcs pointing in both directions between two vertices. Here, the extremal digraph is again $\overrightarrow{T}_{n,k}$, but there is no known short proof, and we also need $n$ to be sufficiently large. More precisely, in 2023, Zhou and Li~\cite{zhli} proved the following theorem. Here and throughout $a(G)$ denotes the number of arcs in $G$ and $\ex(n,\overrightarrow{F})$ is the Tur\'an number of the digraph $\overrightarrow{F}$, i.e., the maximum number of arcs in a $\overrightarrow{F}$-free digraph on $n$ vertices.

\begin{theorem}[Zhou--Li~\cite{zhli}]
    If $n \geq n_0(k)$, then
    \[
    \ex(n,\overrightarrow{P}_{k+1}) = a(\overrightarrow{T}_{n,k}).
    \]
    Moreover, the extremal digraphs are precisely the
 transitive Tur\'an digraphs $\overrightarrow{T}_{n,k}$.
\end{theorem}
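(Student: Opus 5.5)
The plan is induction on $n$ combined with a stability/degree-deletion scheme, which is the usual way to pin down an exact Tur\'an number for large $n$. The lower bound is immediate: a directed path in $\overrightarrow{T}_{n,k}$ can visit each part $V_i$ at most once, and only in increasing order of $i$, so it has at most $k$ vertices and $\overrightarrow{T}_{n,k}$ is $\overrightarrow{P}_{k+1}$-free.

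For the upper bound, let $G$ be a $\overrightarrow{P}_{k+1}$-free digraph on $n$ vertices with $a(G)\ge a(\overrightarrow{T}_{n,k})$. For each vertex $v$ write $d(v)=d^+(v)+d^-(v)$. We have $a(\overrightarrow{T}_{n,k})-a(\overrightarrow{T}_{n-1,k})=n-\lceil n/k\rceil$. First, if some vertex has $d(v)<n-\lceil n/k\rceil$, delete it and continue. The standard counting argument then goes as follows. Each deletion raises the surplus $a(G)-a(\overrightarrow{T}_{n,k})$ by at least one. Any $\overrightarrow{P}_{k+1}$-free digraph on $m$ vertices has at most roughly $km$ arcs more than the Tur\'an number; this crude bound comes from the Erd\H{o}s--Gallai-type observation that a digraph whose $2$-cycles form a graph containing a long path already contains $\overrightarrow{P}_{k+1}$. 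Hence the process stops after $O(k^2)$-ish steps at a subgraph $G'$ on $n'\ge n-O(k^2)$ vertices with minimum total degree at least $n'-\lceil n'/k\rceil$ and $a(G')\ge a(\overrightarrow{T}_{n',k})$. If $G'$ is shown to be $\overrightarrow{T}_{n',k}$, the surplus count forces no deletions at all, so $G=G'$.

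So it suffices to handle $G$ with $\delta(G)\ge n-\lceil n/k\rceil$. The key structural step uses the longest path. Take a longest directed path and use the standard rotation arguments: the endpoints of a longest path have all out-neighbours (respectively in-neighbours) on the path. Combined with the degree condition, this shows that double arcs (2-cycles) are very rare. Two disjoint 2-cycles, or a 2-cycle together with a large dense structure, allow the path to be extended. Thus $G$ is an oriented graph plus few extra arcs. Now apply the oriented result from the introduction (Tur\'an plus R\'edei) in a stability form: an oriented graph with nearly $a(\overrightarrow{T}_{n,k})$ arcs and no $\overrightarrow{P}_{k+1}$ is $K_{k+1}$-free up to $o(n^2)$ edits. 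By Erd\H{o}s--Simonovits stability its underlying graph is close to $T_{n,k}$ with parts $V_1,\dots,V_k$. The orientation between parts must then be nearly transitive, since otherwise a directed Hamilton path through a $k$-vertex tournament on one vertex from each part could be prolonged using a vertex of the majority-direction part. The min-degree condition then cleans this up: every vertex has few neighbours in its own part, and all arcs go "forward".

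Finally, I would show exactness. If some vertex $v\in V_i$ has an arc inside $V_i$, or a backward arc, or a 2-cycle, then using the huge common neighbourhoods we build a path that visits $V_1,\dots,V_{i-1}$, then uses the bad arc to visit two vertices associated with level $i$, then continues through $V_{i+1},\dots,V_k$. That path has $k+1$ vertices. So $G$ is a subgraph of a transitive Tur\'an-type digraph, and the arc count forces balanced parts and completeness, which gives the uniqueness claim.

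The main obstacle I expect is the middle step: controlling 2-cycles. Stability for oriented graphs is clean, but a digraph could in principle trade a part structure for many 2-cycles, for example disjoint bidirected $K_{k}$'s give only about $kn$ arcs, though mixed constructions are subtler. I would first try to prove directly that under $\delta\ge n-\lceil n/k\rceil$ the 2-cycle graph has bounded maximum path length and hence $O(kn)$ edges. That is negligible against the quadratic main term, so stability applies to the underlying oriented part.
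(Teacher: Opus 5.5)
Your plan is a viable but genuinely different route. It is the classical stability method: reduce to minimum degree, get approximate structure, clean up, then kill each bad arc with a path through $V_1,\dots,V_k$. This is closer in spirit to Zhou and Li's Erd\H{o}s--Stone-based proof. The paper uses no stability, typical vertices or clean-up. It runs progressive induction on a $k$-vertex path $P$ in an extremal $G$. Every outside vertex has at most $k-1$ arcs to or from $P$ (Claim~\ref{degree-sum}), so one of two things happens. Either deleting $P$ costs fewer than $a(\overrightarrow{T}_{n,k})-a(\overrightarrow{T}_{n-k,k})$ arcs, which yields a strictly worse extremal digraph on $n-k$ vertices; iterating this eventually contradicts the weak bound of Lemma~\ref{bad-bound}. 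Or sorting the outside vertices by their number of in-neighbours on $P$ gives classes with only forward arcs (Claim~\ref{trans-classes}), after which rebalancing and Claims~\ref{F-nonempty} and~\ref{no-back} force $G=\overrightarrow{T}_{n,k}$. This buys the explicit threshold $50k^6$. Your route, with $o(n^2)$ stability, gives only an inexplicit $n_0(k)$.

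Some corrections.

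(1) The deletion count is wrong. The a priori surplus bound $\frac{k-1}{2}m$ from Lemma~\ref{bad-bound} is linear in $m$, so $t$ deletions only give $t\le\frac{k-1}{2}(n-t)$, i.e.\ $n'\ge\frac{2n}{k+1}$, not $n'\ge n-O(k^2)$. The scheme survives, since a final $G'=\overrightarrow{T}_{n',k}$ has surplus $0\ge t$. But your minimum-degree result must then hold for all $n'\ge n_1(k)$, and you take $n_0=\frac{k+1}{2}n_1$.

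(2) Your ``main obstacle'' is not one. The graph of 2-cycles is $P_{k+1}$-free with no degree hypothesis, so it has at most $\frac{k-1}{2}n$ edges by Erd\H{o}s--Gallai, exactly as in Lemma~\ref{bad-bound}. Moreover, by Theorem~\ref{GHRV} the underlying graph of $G$ is exactly $k$-colourable, so the stability step is elementary and quantitative. The rotation discussion can be dropped.

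(3) The real weight lies in two steps you only assert.
\begin{itemize}
\item Near-transitivity: your one-line reason does not handle a pair of parts whose arcs are mixed in direction. One way to get it is to apply Claims~\ref{degree-sum} and~\ref{trans-classes} to a R\'edei Hamilton path of a transversal tournament.
\item Clean-up: $d^+(v)+d^-(v)$ counts each 2-cycle twice, so the global bound on 2-cycles does not show that $v$ is adjacent to almost everything outside one part. You need a per-vertex bound. For example, two typical 2-cycle partners $u\in V_j$ and $u'\in V_l$ with $j\le l$ put $v$ on a directed path with $k+2+l-j$ vertices.
\end{itemize}
With that bound, the usual argument assigns every vertex to a part: the typical in-neighbours of $v$ lie in parts strictly below its typical out-neighbours, and $v$ has no typical neighbour in exactly one part. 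Your exactness step then goes through.
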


Zhou and Li's proof uses the Erd\H os-Stone theorem~\cite{ES}, which leads to a superexponential bound $n_0(k)$. They conjectured, by contrast, that the theorem should hold for $n\ge 3k$. For $k+1\le n\le 2k-1$, $\overrightarrow{T}_{n,k}$ is not extremal, and for $2k\le n\le 3k-2$, if the extremal digraph is a $\overrightarrow{T}_{n,k}$, it is not unique---see~\cite{zhli}. In this note we present a new proof of this theorem with a polynomial threshold $n_0(k)$.

\begin{theorem}\label{main-thm}
If $n\ge 50k^6$, then \[{\rm ex}(n,\overrightarrow{P}_{k+1}) = 
a(\overrightarrow{T}_{n,k}).
\]
Moreover, the extremal digraphs are precisely the
 transitive Tur\'an digraphs $\overrightarrow{T}_{n,k}$.
\end{theorem}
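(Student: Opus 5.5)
We argue by induction on $n$ via vertex deletion, i.e.\ a progressive induction in the style of Simonovits. Let $G$ be a $\overrightarrow{P}_{k+1}$-free digraph on $n\ge 50k^6$ vertices with $a(G)\ge a(\overrightarrow{T}_{n,k})$. Write $d(v)=d^+(v)+d^-(v)$ and let $\delta_n = a(\overrightarrow{T}_{n,k})-a(\overrightarrow{T}_{n-1,k})\approx (1-\tfrac1k)n$. If some vertex has $d(v)<\delta_n$, delete it. The surplus $a(G)-a(\overrightarrow{T}_{n,k})$ then strictly increases. Iterate this deletion.

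We need a crude a priori bound to stop the iteration early. One such bound is $a(H)\le (k-1)|H|^2$ for any $\overrightarrow{P}_{k+1}$-free $H$: the underlying multigraph has no undirected structure forcing long paths beyond a Ramsey-type bound. More usefully, we can use the simple bound that a digraph with no $\overrightarrow{P}_{k+1}$ has $a\le \binom{m}{2}+O(km)$, which follows from Gallai--Roy-type colouring ($\chi\le k$ for the digraph with no path of length $k$, so the underlying simple graph is $k$-partite). This gives $a(H)\le t_k(m)+(\text{number of 2-cycles})$. With this bound, the surplus cannot exceed $O(k m)$ at size $m$. Since the surplus grows by at least $1$ per deletion, the process stops after at most $n-\sqrt{n}/\mathrm{poly}(k)$ steps. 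We therefore reach a subgraph $G'$ on $m\ge n^{1/2}/k^{O(1)}$ vertices with minimum total degree at least $\delta_m$ and $a(G')\ge a(\overrightarrow{T}_{m,k})$, with equality only if no deletions occurred. Here the polynomial threshold $50k^6$ should come out of this bookkeeping. The aim is then to prove that $G'$ with high minimum degree must be a $\overrightarrow{T}_{m,k}$.

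For the structural step, fix a Gallai--Roy partition: let $V_i$ be the set of vertices whose longest directed path ending at them has $i$ vertices. Each $V_i$ contains no arc, and all arcs go from lower to higher levels except arcs inside 2-cycles. More precisely, an arc $u\to v$ with $\ell(u)\ge \ell(v)$ must have its reverse cycling back, and this is controlled by path extension. With $d(v)\ge (1-\tfrac1k)m$ and $k$ independent-ish classes, a counting argument forces every $|V_i|\approx m/k$ and forces almost all cross pairs to be arcs. Next we show 2-cycles cannot exist. If $u\leftrightarrow v$ is a 2-cycle, then using the near-completeness between consecutive classes, we can route a path through all $k$ levels and use the 2-cycle to insert an extra vertex, creating $\overrightarrow{P}_{k+1}$. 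With 2-cycles excluded, $G'$ is an oriented $k$-partite graph, so $a(G')\le t_k(m)$ with equality only for the transitive Tur\'an digraph. Transitivity is forced because any backward arc between classes, combined with the dense forward structure, gives a long path, in the same spirit as R\'edei's theorem.

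Finally we transfer back to $G$. If any deletion occurred, then $a(G')>a(\overrightarrow{T}_{m,k})$, a contradiction. Hence $G=G'$ is $\overrightarrow{T}_{n,k}$, which gives both the bound and the uniqueness. The main obstacle is the structural step: showing that minimum degree $(1-\tfrac1k)m$ combined with $\overrightarrow{P}_{k+1}$-freeness kills all 2-cycles and all backward arcs, while keeping the error terms polynomial in $k$. I would first try a robust stability argument on the level partition, handling each "bad" vertex by showing that it has $\Omega(m/k)$ neighbours in some class that lets it extend a path.
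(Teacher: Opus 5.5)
\textbf{There is a genuine gap: the structural step, which is the heart of the theorem, is not established.} Your outer reduction is sound. Deleting a vertex of total degree below $\delta_n$ raises the surplus by at least one, and the surplus is bounded, so you end at some $G'$ with minimum total degree at least $\delta_m$. But this only reduces the theorem to the following statement: a $\overrightarrow{P}_{k+1}$-free digraph on $m$ vertices with minimum total degree at least $\delta_m$ has at most $a(\overrightarrow{T}_{m,k})$ arcs, with equality only for $\overrightarrow{T}_{m,k}$. That statement is essentially the entire difficulty.

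Your argument for it breaks down already at the partition. Take $\ell(v)$ to be the number of vertices on a longest path ending at $v$. An arc $u\to v$ with $\ell(u)\ge\ell(v)$ only forces $v$ to lie on every longest path ending at $u$. It does not force the arc $vu$, and the levels need not be independent. For example, a directed triangle is $\overrightarrow{P}_{k+1}$-free for $k\ge 3$, yet all three vertices land in one level and there is no 2-cycle. If you instead use the true Gallai--Roy levels from a maximal acyclic spanning subdigraph $D'$, the levels are independent, but every arc outside $D'$ points backwards, not just one arc of each 2-cycle. So ``independent levels, with only 2-cycles pointing backwards'' is exactly the transitive structure you are trying to prove, not a valid starting point.

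After that, the balancing of the classes, near-completeness, and the exclusion of every 2-cycle and every backward arc are only announced. For an exact result they must hold for \emph{every} vertex, including atypical ones such as vertices of large bidirected degree. For those you have nothing beyond the minimum-degree condition, and no argument is given.

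Two smaller points. First, your crude bound should read $a(H)\le e(T_{m,k})+\frac{k-1}{2}m$; ``$\binom m2+O(km)$'' is wrong. Getting the correct bound needs Erd\H os--Gallai applied to the graph of 2-cycles, on top of Gallai--Roy. Second, with that bound you get $n-m\le\frac{k-1}{2}m$, hence $m\ge 2n/(k+1)$. The $\sqrt n/\mathrm{poly}(k)$ you state is far too weak to let $50k^6$ ``come out of the bookkeeping.''

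For comparison, the paper avoids any stability argument. It deletes a whole $\overrightarrow{P}_k$, say $P=x_1\cdots x_k$, and lets $P$ itself build the partition:
\begin{itemize}
\item Inserting an outside vertex $y$ between $x_\ell$ and $x_{\ell+1}$ shows $y$ has at most $k-1$ arcs to or from $P$.
\item The vertices attaining $k-1$ are sorted into classes $U_i$ by their in-degree $i-1$ from $P$.
\item Replacing $x_{\ell-1}$ by an arc $yz$, i.e.\ considering $x_1\cdots x_{\ell-2}\,y\,z\,x_\ell\cdots x_k$, shows every arc between classes goes from $U_i$ to $U_j$ with $i<j$.
\end{itemize}
This gives the transitive $k$-partite structure outright. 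The remaining work is counting (to show the classes are large) and ruling out backward arcs inside $P$. The outcome is that either $G=\overrightarrow{T}_{n,k}$, or some $P$ exists whose deletion raises the surplus by at least one per $k$ deleted vertices.
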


It's natural to try to prove this theorem by induction on $n$, and the induction step does indeed go through. But the base case is trickier, since the conclusion of the theorem isn't even true when $n\le 3k-2$. In some sense, the theorem becomes ``more" true as $n$ gets larger. To deal with this situation, which frequently occurs in extremal graph theory, we will use Simonovits' method of {\it progressive induction}~\cite{Miki,Survey, Sprangel}. The idea is as follows. First, we establish a weak form of the base case (in Lemma~\ref{bad-bound} below). Then, to compensate for a weaker base case, we prove a stronger induction step. Namely, we show, in Lemma~\ref{main-lemma} below, that if $n$ is sufficiently large, and if $G$ is an extremal digraph on $n$ vertices that has $f$ more arcs than $\overrightarrow{T}_{n,k}$, then we can find a digraph $G'$ on $n-k$ vertices that has at least $f+1$ more arcs than $\overrightarrow{T}_{n-k,k}$. In other words, if there is a counterexample to the theorem, then there is a {\it strictly worse} (i.e., more extreme) counterexample on fewer vertices. Repeating this step enough times, we eventually contradict even our weaker base case, and we are done.

\section{Proof of Theorem~\ref{main-thm}}

First we prove our weak base case. Just for fun, we will apply the following theorem, which was proved independently by four authors in the 1960s (their papers are in English, French, German and Russian).

\begin{theorem}[Gallai--Hasse--Roy--Vitaver \cite{MR233733, MR179105,MR225683,MR145509}]\label{GHRV}
    If an oriented graph $G$ is $\overrightarrow{P}_{k+1}$-free, then its underlying graph  is $k$-colorable.
\end{theorem}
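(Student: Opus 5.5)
The plan is the standard longest-path labelling argument. Since $G$ is $\overrightarrow{P}_{k+1}$-free, every directed path in $G$ has at most $k$ vertices. The difficulty is that $G$ may contain directed cycles, so "longest path ending at $v$" is not automatically well behaved. To fix this, I will first pass to a maximal acyclic spanning subgraph $H \subseteq G$, obtained by choosing a maximal subset of arcs that contains no directed cycle. I then color $H$, not $G$, and argue at the end that the coloring also works for the underlying graph of $G$.

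For each vertex $v$, let $c(v)$ be the number of vertices of a longest directed path in $H$ ending at $v$. Because $H\subseteq G$ is also $\overrightarrow{P}_{k+1}$-free, we have $c(v)\in\{1,\dots,k\}$, so $c$ uses at most $k$ colors. For an arc $uv$ of $H$, take a longest path $P$ in $H$ ending at $u$. Since $H$ is acyclic, $v\notin P$, for otherwise the segment of $P$ from $v$ to $u$ together with the arc $uv$ would form a directed cycle. Hence $P$ followed by $uv$ is a path ending at $v$, which gives $c(v)\ge c(u)+1>c(u)$. In particular $c$ is proper on every arc of $H$.

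Now consider an arc $uv$ of $G$ that is not in $H$. By maximality of $H$, adding $uv$ to $H$ creates a directed cycle, so $H$ contains a directed path $Q$ from $v$ to $u$. The inequality from the previous paragraph is strict along every arc, so iterating it along $Q$ gives $c(u)\ge c(v)+|E(Q)|>c(v)$. Thus $c(u)\neq c(v)$ for these arcs as well. Since $G$ is oriented, each edge of the underlying graph corresponds to exactly one arc of $G$, and every arc is handled by one of the two cases. Therefore $c$ is a proper $k$-coloring of the underlying graph.

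The main obstacle is dealing with directed cycles, and the maximal-acyclic-subgraph trick is exactly what resolves it: arcs outside $H$ are forced to lie against the strictly increasing direction of $c$ along paths in $H$. The remaining steps are routine.
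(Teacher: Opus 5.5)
Your proof is correct and complete. It is the standard maximal-acyclic-subdigraph argument, and every step checks out; in particular, the arc $uv\notin H$ must lie on the cycle it creates, so $Q$ is a nonempty path in $H$ from $v$ to $u$, which gives $c(u)\ge c(v)+1$.

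The paper does not prove this theorem at all. It quotes it from the four original sources and uses it only in Lemma~\ref{bad-bound}, to get $a(R)\le e(T_{n,k})$. For that purpose the authors point out an alternative route. If the underlying graph of $R$ had more than $e(T_{n,k})$ edges, Tur\'an's theorem would give a tournament on $k+1$ vertices inside $R$, and R\'edei's theorem would give a Hamiltonian $\overrightarrow{P}_{k+1}$ in it.

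That route only shows the underlying graph is $K_{k+1}$-free. This is strictly weaker than $k$-colorable: for $k=2$, $C_5$ is triangle-free but not $2$-colorable. It is, however, exactly enough for the arc count the paper needs. Your argument proves the full statement as written.

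Your appeal to orientedness in the final bookkeeping is also unnecessary. With bidirected pairs, every edge of the underlying graph still corresponds to at least one arc, and your two cases still cover every arc. So the same colouring works for the underlying graph of any $\overrightarrow{P}_{k+1}$-free digraph.
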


We will also need the classical result of Erd\H os and Gallai quoted in the introduction.

\begin{theorem}[Erd\H os--Gallai \cite{EG}]\label{EG}
    If $G$ is a $P_{k+1}$-free graph on $n$ vertices, then $e(G) \leq \frac{k-1}{2}n$.
\end{theorem}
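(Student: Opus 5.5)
The statement to prove is Theorem~\ref{EG}: a $P_{k+1}$-free graph $G$ on $n$ vertices has $e(G)\le \frac{k-1}{2}n$. I will argue by induction on $n$. The claim is trivial for $n\le k$, since then $e(G)\le \binom n2 = \frac{n-1}{2}n\le \frac{k-1}{2}n$. It also suffices to treat connected graphs, because the bound is additive over components. So assume $G$ is connected and $n\ge k+1$.

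If some vertex $v$ has degree at most $\frac{k-1}{2}$, delete it. The graph $G-v$ is still $P_{k+1}$-free, so by induction
\[
e(G)\le \tfrac{k-1}{2}(n-1)+\tfrac{k-1}{2}=\tfrac{k-1}{2}n .
\]
Hence we may assume the minimum degree satisfies $\delta(G)\ge k/2$ (for integers, $\delta>\frac{k-1}{2}$ means $2\delta\ge k$).

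The key step is then a Dirac-type longest path argument. Take a longest path $P=v_1v_2\cdots v_m$ in $G$; since $G$ is $P_{k+1}$-free, $m\le k$. By maximality, every neighbour of $v_1$ and of $v_m$ lies on $P$. The standard P\'osa crossing argument applies as follows. Let $S=\{i: v_1v_{i+1}\in E\}$ and $T=\{i: v_iv_m\in E\}$. Both are subsets of $\{1,\dots,m-1\}$, each of size at least $k/2$, so $|S|+|T|\ge k>m-1$. Therefore some index $i$ lies in both sets, and $v_1\cdots v_iv_mv_{m-1}\cdots v_{i+1}v_1$ is a cycle $C$ on all $m$ vertices of $P$.

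Since $G$ is connected and $n\ge k+1>m$, some vertex outside $C$ is adjacent to a vertex of $C$. Opening the cycle at that vertex and appending the outside neighbour gives a path on $m+1$ vertices, contradicting the maximality of $P$. This finishes the induction.

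The main obstacle is the parity bookkeeping in the degree threshold. The deletion step needs $\deg v\le \frac{k-1}{2}$, while the closing step needs $\deg(v_1)+\deg(v_m)\ge m$. Both are covered by the integer dichotomy $\delta\le \frac{k-1}{2}$ versus $\delta\ge \frac k2$, together with $m\le k$. The cases $m=1,2$ need a separate check, but they are trivial: $\delta\ge 1$ with $G$ connected and $n\ge 2$ already forces longer paths.
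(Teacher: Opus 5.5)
Your proof is correct. The paper does not prove this statement: it quotes it from \cite{EG} and uses it only as a black box in Lemma~\ref{bad-bound}. So there is no in-paper argument to compare against.

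What you give is the standard direct proof. If some vertex has degree at most $\frac{k-1}{2}$, you delete it and apply induction. You correctly apply the induction hypothesis to all $P_{k+1}$-free graphs on fewer vertices, including disconnected ones, as $G-v$ requires. Otherwise $2\delta\ge k$, and the crossing argument shows that a connected graph on $n\ge k+1$ vertices with this minimum degree contains $P_{k+1}$. The parity dichotomy is exactly right, and $|S|+|T|\ge k\ge m>m-1$ gives the spanning cycle on $V(P)$ when $m\ge 3$.

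The only slip is cosmetic. For $m=2$ you need $n\ge 3$, not $n\ge 2$. This holds because $n\ge k+1\ge m+1$. In fact, for $m\le 2$ your final extension step works directly, with no cycle needed.

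The other standard route derives the path bound from the Erd\H os--Gallai circumference theorem: a graph with no cycle longer than $k$ has at most $\frac{k}{2}(n-1)$ edges. One adjoins a vertex joined to every other vertex and applies that theorem. That route yields the stronger cycle result as well. Yours is shorter and self-contained, and it isolates a useful structural fact: every connected $P_{k+1}$-free graph on at least $k+1$ vertices has a vertex of degree at most $\frac{k-1}{2}$.
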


Now for our weak base case (which also appears as Corollary 2.1 in \cite{zhli}).

\begin{lemma}\label{bad-bound}
    If $G$ is a $\overrightarrow{P}_{k+1}$-free digraph on $n$ vertices, then 
    \[
    a(G) \leq e(T_{n,k}) + \frac{k-1}{2}n.
    \]
\end{lemma}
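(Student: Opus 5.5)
Split the arcs of $G$ into two kinds. Say a pair $\{u,v\}$ is a \emph{double pair} if both arcs $uv$ and $vu$ are present, and a \emph{single pair} if exactly one of them is. Let $D$ be the undirected graph on $V(G)$ whose edges are the double pairs, and let $H$ be the underlying undirected graph of all adjacent pairs (single or double). Then
\[
a(G) = e(H) + e(D).
\]
It therefore suffices to prove $e(H)\le e(T_{n,k})$ and $e(D)\le \frac{k-1}{2}n$.

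For the bound on $D$: any undirected path $v_1v_2\cdots v_{k+1}$ in $D$ gives a directed path $\overrightarrow{P}_{k+1}$ in $G$, because every double pair carries arcs in both directions. So $D$ is $P_{k+1}$-free, and Theorem~\ref{EG} gives $e(D)\le \frac{k-1}{2}n$.

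For the bound on $H$: choose an orientation $G_0$ of $H$ by keeping, for each adjacent pair, one arc of $G$ between its endpoints. For a single pair this is the unique arc; for a double pair either arc will do. Then $G_0$ is an oriented subgraph of $G$, so it is $\overrightarrow{P}_{k+1}$-free, and its underlying graph is $H$. By Theorem~\ref{GHRV}, $H$ is $k$-colorable. A $k$-colorable graph is $K_{k+1}$-free, so Tur\'an's theorem gives $e(H)\le e(T_{n,k})$. Adding the two bounds gives the lemma.

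The only step that needs care is the construction of $G_0$: Theorem~\ref{GHRV} is stated for oriented graphs, so we must pass to an oriented subgraph whose underlying graph is all of $H$. Choosing one arc per adjacent pair does this. The rest is routine bookkeeping.
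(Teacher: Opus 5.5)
Your proof is correct and essentially identical to the paper's. Both write $a(G)$ as the number of arcs in an oriented subgraph (one arc kept per adjacent pair) plus the number of bidirected pairs. The first term is bounded via Gallai--Hasse--Roy--Vitaver together with Tur\'an's theorem, and the second via Erd\H{o}s--Gallai applied to the $P_{k+1}$-free graph of bidirected pairs.
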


\begin{proof}
Beginning with the directed graph $G$, for every bidirected pair of vertices, delete exactly one of the two arcs and let $R$ be the resulting oriented graph.
    Let $B$ be the graph whose edges are pairs of vertices that are bidirected in $G$. Thus, $a(G) = a(R) + e(B)$.

     Since $R$ is a subdigraph of $G$, it is also $\overrightarrow{P}_{k+1}$-free.
     The Tur\'an graph $T_{n,k}$ has the maximum number of edges among $k$-colorable graphs, so Theorem~\ref{GHRV} implies $a(R) \leq e(T_{n,k})$. (Alternatively, we could deduce this from the theorems of R\'edei and T\'uran, as in the introduction.) The graph $B$ is $P_{k+1}$-free, since  otherwise we can find a $\overrightarrow{P}_{k+1}$ among the bidirected pairs in $G$. Therefore, by Theorem~\ref{EG}, we have $e(B) \leq \frac{k-1}{2}n$.
\end{proof}

Let $a(X,Y)$ denote the number of arcs from a set of vertices $X$ to a set of vertices $Y$. If $X$ is a single vertex $x$, we write $a(x,Y)$ instead of $a(\{x\},Y)$. For a digraph $G$ and a set of vertices $X \subseteq V(G)$, let $G-X$ denote the induced subdigraph of $G$ on vertex set $V(G) \setminus X$.

\begin{lemma}\label{main-lemma}
Let $k\geq 2$ and $n\ge 33k^4$, and let $G$ be an extremal $\overrightarrow{P}_{k+1}$-free digraph on $n$ vertices. Then either $G$ contains a directed path $\overrightarrow{P}_k$ on vertex set $P$ such that
\begin{equation}\label{defect-bound}
a(G)-a(G-P) < a(\overrightarrow{T}_{n,k}) - a(\overrightarrow{T}_{n-k,k})
\end{equation}
or $G$ is a transitive Tur\'an digraph $\overrightarrow{T}_{n,k}$.
\end{lemma}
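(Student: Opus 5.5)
The plan is a counting argument on the arcs incident to a single path on $k$ vertices, followed by an analysis of the equality case. First I record the target. Write $D:=a(\overrightarrow{T}_{n,k})-a(\overrightarrow{T}_{n-k,k})$. Removing one vertex from each part of $T_{n,k}$ yields $T_{n-k,k}$, so $D=(k-1)n-\binom{k}{2}$, up to a small correction depending on $n \bmod k$ that I will handle separately. Let $G$ be extremal, so $a(G)\ge a(\overrightarrow{T}_{n,k})$. Since $a(\overrightarrow{T}_{n,k})$ exceeds what a $\overrightarrow{P}_k$-free digraph can have (by Lemma~\ref{bad-bound} with $k-1$ in place of $k$, and $n$ large), $G$ contains copies of $\overrightarrow{P}_k$. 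For a path $P=p_1p_2\cdots p_k$ I split
\[
a(G)-a(G-P)=a_{\rm in}(P)+\sum_{v\notin P}\bigl(a(v,P)+a(P,v)\bigr),
\]
where $a_{\rm in}(P)$ is the number of arcs inside $P$.

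The basic local estimate is as follows. Fix $v\notin P$, let $I=\{i: p_i\to v\}$ and $O=\{j: v\to p_j\}$. $\overrightarrow{P}_{k+1}$-freeness forbids $k\in I$ (extend at the end), $1\in O$ (extend at the start), and $i\in I,\ i+1\in O$ (insert $v$ between $p_i$ and $p_{i+1}$). Hence $I$ and $O-1$ are disjoint subsets of $\{1,\dots,k-1\}$, so $a(v,P)+a(P,v)\le k-1$. Moreover, equality forces $v$ to have exactly one arc to each position in a specific complementary pattern. Summing gives $a(G)-a(G-P)\le a_{\rm in}(P)+(k-1)(n-k)$. If $a_{\rm in}(P)\le\binom{k}{2}$, this is at most $(k-1)n-\binom{k}{2}=D$. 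Equality then requires both that $P$ spans a transitive tournament ordered along the path, and that every outside vertex attains $k-1$.

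So the proof has two steps. In Step 1, I show that some $\overrightarrow{P}_k$ has $a_{\rm in}(P)\le\binom{k}{2}$, i.e.\ its vertex set induces no bidirected pair and no $2$-cycle-type back arc that is not compensated. In Step 2, I show that if no path gives strict inequality~\eqref{defect-bound}, then $G\cong\overrightarrow{T}_{n,k}$. For Step 1, I take $P$ minimizing $a_{\rm in}(P)$, and first use extremality to get a minimum-degree bound. If some vertex had total degree below roughly $(k-1)n/k\cdot 2$ minus lower order terms, deleting it and comparing with $a(\overrightarrow{T}_{n-1,k})$ contradicts Lemma~\ref{bad-bound}, using $n\ge 33k^4$. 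With high minimum degree, suppose $P$ contains a back arc $p_j\to p_i$ ($i<j$) or a bidirected pair. Then I count outside vertices $v$ achieving $k-1$ arcs to $P$. Such a $v$ together with the back arc lets me reroute: for instance, from $p_j\to p_i$ I get a cycle-plus-path structure on $P$, and the position of the ``switch'' in $v$'s pattern lets me insert $v$ to obtain a $\overrightarrow{P}_{k+1}$. So these $v$ must lose at least one arc each. As long as a fraction of outside vertices attain the bound (guaranteed by the degree count), the loss $\Omega(n/k^2)$ exceeds the at most $k(k-1)-\binom{k}{2}$ surplus inside $P$. This gives strict inequality whenever $P$ is not transitive.

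For Step 2, assume every $\overrightarrow{P}_k$ gives equality. Then every outside vertex $v$ has pattern $I=\{1,\dots,s-1\}$, $O=\{s+1,\dots,k\}$ for a unique $s=s(v)$, and misses only $p_s$. Set $V_s=\{p_s\}\cup\{v: s(v)=s\}$. Replacing $p_s$ by $v\in V_s$ yields another path (both are adjacent to $p_{s\pm1}$ correctly). So the same equality applies to all such swapped paths, and this forces arcs between $V_s$ and $V_t$ ($s<t$) to go forward only, with $V_s$ independent. Counting then gives $a(G)\le a(\overrightarrow{T}_{n,k})$ with equality only for balanced parts. I expect Step 1 to be the main obstacle: the rerouting argument must handle every possible non-transitive configuration inside $P$, including bidirected pairs, and must be carried out with explicit constants to reach the $33k^4$ threshold.
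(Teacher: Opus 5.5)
There is a genuine gap in Step~1, and Step~1 is where the real difficulty lies. Your bookkeeping matches the paper: the bound $a(v,P)+a(P,v)\le k-1$, the exact value $D=\binom{k}{2}+(k-1)(n-k)$, and trading the surplus inside $P$ against losses outside. The problem is the rerouting claim. You assert that a back arc in $P$, together with a single outside vertex $v$ attaining $a(v,P)+a(P,v)=k-1$, yields a $\overrightarrow{P}_{k+1}$ on $P\cup\{v\}$. This is false.

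Take $k=4$ and $P=p_1p_2p_3p_4$ plus the arc $p_3p_2$. Let $v$ have an interval pattern $p_1,\dots,p_{s-1}\to v\to p_{s+1},\dots,p_4$.
\begin{itemize}
\item For $s=1$, both $p_1$ and $v$ have in-degree $0$ in $G[P\cup\{v\}]$.
\item For $s=4$, both $p_4$ and $v$ are sinks.
\item For $s=2,3$, a Hamiltonian path must run from $p_1$ to $p_4$, and trying the two possible second vertices shows none exists.
\end{itemize}
So in $\overrightarrow{T}_{n,4}$ (with $p_i\in V_i$) plus the arc $p_3p_2$, every outside vertex still attains $k-1$. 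Every copy of $\overrightarrow{P}_5$ there uses at least two outside vertices, e.g.\ $y\,p_3p_2\,z\,p_4$ with $y\in V_2$, $z\in V_3$. For a back arc $x_ix_j$ with larger $j$, you need a whole directed path of outside vertices to replace $x_1\cdots x_{j-1}$.

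Even where single insertions do work, the argument is incomplete. For instance, with $k=3$ and the arc $p_2p_1$, exactly the vertices with $s=2$ are hurt. You would then need $\Omega(n)$ outside vertices with that particular pattern, and your degree count does not give this.

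The missing idea is the global structure the paper builds before attacking back arcs:
\begin{itemize}
\item Among full-count vertices, arcs go only from a lower in-degree class $U_i$ to a higher one (Claim~\ref{trans-classes}, via the two-vertex insertion $x_{\ell-2}\,y\,z\,x_\ell$).
\item In the case $c\ge|C|$, extremality puts $G[U']$ within $2k^2m$ arcs of $\overrightarrow{T}_{m,k}$. Hence every $|U_i|\ge m/(2k)$.
\item The sets $F_i$ of endpoints of directed paths $y_1\cdots y_{i-1}y$ through the classes are then large (Claim~\ref{F-nonempty}), and vertices of $F_i$ have exactly the interval pattern.
\item A back arc $x_ix_j$ is excluded by the path $y_1\cdots y_{i-2}y\,x_ix_j\cdots x_{i-1}z\,x_{i+1}\cdots x_k$ with $y\in F_{i-1}$, $z\in F_i$.
\end{itemize}
This works for an arbitrary $P$, so no transitive path needs to be found.

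Two further problems. First, your minimum-degree step fails as stated. Deleting one vertex and invoking Lemma~\ref{bad-bound} gives no contradiction, because that lemma's slack $\frac{k-1}{2}n$ exceeds every degree. Second, in Step~2, the equality $a(v,P)+a(P,v)=k-1$ only says that $I$ and $O-1$ partition $\{1,\dots,k-1\}$. It does not force the interval pattern: take $k=3$ and $v$ joined to $P$ only by a bidirected pair with $p_2$. So your swapping argument needs that pattern to be established first, again by something like the $F_i$ analysis.
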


\begin{proof}
As ${G}$ is extremal $\overrightarrow{P}_{k+1}$-free and $n \geq 33k^4$, a simple computation gives 
\[
a(G) \geq a(\overrightarrow{T}_{n,k}) > e(T_{n,k-1}) + \frac{k-2}{2}n.
\]
Now Lemma~\ref{bad-bound} implies that $G$ contains a directed $k$-vertex path $P=x_1x_2\cdots x_{k}$. 
Write  $U=V(G) \setminus P$.

\begin{figure}[ht]
\centering

\begin{minipage}{0.48\textwidth}
\centering
\begin{tikzpicture}[
    scale=0.95,
    v/.style={circle,draw,fill=white,minimum size=5pt,inner sep=0pt},
    arr/.style={-{Latex[length=2mm]},thick},
    bad/.style={-{Latex[length=2mm]},thick, dash pattern=on 0.6pt off 1pt ,red},
    lab/.style={font=\small}
]
    \node[v,label=above:{$x_1$}] (x0) at (0,0) {};
    \node[lab] (d1) at (1.2,0) {$\cdots$};
    \node[v,label=above:{$x_\ell$}] (xi) at (2.4,0) {};
    \node[v,label=above:{$x_{\ell+1}$}] (xip) at (3.8,0) {};
    \node[lab] (d2) at (5,0) {$\cdots$};
    \node[v,label=above:{$x_{k}$}] (xk) at (6.2,0) {};

    \node[v,label=below:{$y$}] (y) at (3.1,-1.25) {};

    \draw[arr] (x0) -- (d1);
    \draw[arr] (d1) -- (xi);
    \draw[arr] (xi) -- (xip);
    \draw[arr] (xip) -- (d2);
    \draw[arr] (d2) -- (xk);

    \draw[bad,bend right=18] (xi) to (y);
    \draw[bad,bend right=18] (y) to (xip);

\end{tikzpicture}
\end{minipage}
\hfill
\begin{minipage}{0.48\textwidth}
\centering
\begin{tikzpicture}[
    scale=0.95,
    v/.style={circle,draw,fill=white,minimum size=5pt,inner sep=0pt},
    arr/.style={-{Latex[length=2mm]},thick},
    bad/.style={-{Latex[length=2mm]},thick,dash pattern=on 0.6pt off 1pt,red},
    lab/.style={font=\small}
]
    \node[v,label=above:{$x_1$}] (x0) at (0,0) {};
    \node[lab] (d1) at (1.2,0) {$\cdots$};
    \node[v,label=above:{$x_{\ell-2}$}] (xlm) at (2.4,0) {};
    \node[v,label=above:{$x_{\ell-1}$}] (xlmone) at (3.6,0) {};
    \node[v,label=above:{$x_\ell$}] (xl) at (4.8,0) {};
    \node[lab] (d2) at (6,0) {$\cdots$};
    \node[v,label=above:{$x_{k}$}] (xk) at (7.2,0) {};

    \node[v,label=below:{$y$}] (y) at (2.9,-1.35) {};
    \node[v,label=below:{$z$}] (z) at (4.3,-1.35) {};

    \draw[arr] (x0) -- (d1);
    \draw[arr] (d1) -- (xlm);
    \draw[arr] (xlm) -- (xlmone);
    \draw[arr] (xlmone) -- (xl);
    \draw[arr] (xl) -- (d2);
    \draw[arr] (d2) -- (xk);

    \draw[bad,bend right=15] (xlm) to (y);
    \draw[arr] (y) -- (z);
    \draw[bad,bend right=15] (z) to (xl);

\end{tikzpicture}
\end{minipage}

\caption{The forbidden configurations used in Claims~\ref{degree-sum} and \ref{trans-classes}. At most one of the red (dotted) arcs is present.
}
\end{figure}

\begin{claim}\label{degree-sum}
 If $y\in U$, then $a(y,P)+a(P,y) \le k-1$.
\end{claim}

\begin{proofc}
      Neither arc $yx_1$ nor $x_{k}y$ can be present in ${G}$, or else ${G}$ would contain a $\overrightarrow{P}_{k+1}$. Of the remaining $2(k-1)$ potential arcs between $y$ and $P$, if, for some $1\le \ell \le k-1$, arc $x_\ell y$ is present, arc $yx_{\ell+1}$ must be absent, or else
${G}$ would again contain a $\overrightarrow{P}_{k+1}$, namely $x_1x_2\cdots x_\ell yx_{\ell+1}\cdots x_{k}$. 
\end{proofc}

 Suppose ${G}[P]$ contains $\binom{k}{2}+c$ arcs,
 so that $c\le\binom{k}{2}$. Let $C \subseteq U$ be the vertices $y \in U$ with 
 $a(y,P)+a(P,y)\le k-2$. 
Therefore, by Claim~\ref{degree-sum}, $a(U,P) +a(P,U) \leq (k-1)(n-k)-|C|$.
Therefore,
\[
a(G) - a(G[U]) = a(G[P]) + a(U,P) + a(P,U) \leq \binom{k}{2} + c +(k-1)(n-k)-|C|.
\]
On the other hand,
\[
a(\overrightarrow{T}_{n,k}) - a(\overrightarrow{T}_{n-k,k}) = \binom{k}{2}+(k-1)(n-k).
\]
Combining these estimates gives
\begin{equation}\label{compare-ineq}
a(G) - a(G[U]) \leq a(\overrightarrow{T}_{n,k}) - a(\overrightarrow{T}_{n-k,k}) +c -|C|.
\end{equation}
If $c<|C|$, then \eqref{defect-bound} is established and we are done.
From here we may assume that $c \geq |C| \geq 0$. 

Partition the vertices of $U' := U \setminus C$ into $U_1 \cup U_2 \cup \cdots \cup U_{k}$ such that $y \in U_i$ if and only if $y$ has $a(P,y)=i-1$ in-neighbors from $P$ and, consequently, by Claim~\ref{degree-sum}, $a(y,P)=k-i$ out-neighbors to $P$.

\begin{claim}\label{trans-classes}
    If $yz$ is an arc with $y \in U_i$ and $z \in U_j$, then $i<j$.
\end{claim}

\begin{proofc}
         Arcs $zx_1$ and $zx_2$ cannot be present, since otherwise ${G}$ would contain a $\overrightarrow{P}_{k+1}$.
         Similarly arcs $x_{k-1}y$ and $x_{k}y$ cannot exist.
         Also, if, for some $3\le \ell\le k$, arc $zx_\ell$ is present, arc $x_{\ell-2}y$ must be absent, or else ${G}$ would again contain a $\overrightarrow{P}_{k+1}$, namely $x_1x_2\cdots x_{\ell-2}yzx_\ell\cdots x_{k}$. 
         Therefore, 
\[
a(z,P)+a(P,y)\le k-2.
\]
         Now, $z \in U_j$ implies $a(z,P) = k-j$, and so
         $i-1 =  a(P,y) \leq (k-2) - (k-j) = j-2$ which gives $i<j$, as desired.
\end{proofc}

Now we have that the partition of $U'$ has a transitive structure. In particular, $G[U']$ is a subdigraph of $H$ where $H$ contains exactly every arc from $U_i$ to $U_j$ when $i<j$. Moreover, $a(H) \leq a(\overrightarrow{T}_{m,k})$ where $m:=|U'| = n-k-|C| \geq 33k^4 - k -k^2 \geq 32k^4$.
As $G$ is extremal and $\overrightarrow{T}_{n,k}$ is $\overrightarrow{P}_{k+1}$-free, $a(G) \geq a(\overrightarrow{T}_{n,k})$. Combining with \eqref{compare-ineq} implies 
\begin{equation*}
a(G[U]) \geq a(\overrightarrow{T}_{n-k,k}) -c \geq  a(\overrightarrow{T}_{n-k,k})-k^2.
\end{equation*}

Removing $C$ from $U$ to obtain $U'$ deletes at most $2|C|m +|C|^2 \leq \frac{3}{2}k^2m$ arcs from $U$.
Therefore,
\[
a(G[U']) + \frac{3}{2} k^2m \geq a(G[U]).
\]
Thus, as $G[U'] \subseteq H$, 
\begin{equation}\label{Uprime}
a(H) \geq a(G[U']) \geq a(\overrightarrow{T}_{n-k,k})-k^2  -\frac{3}{2} k^2m > a(\overrightarrow{T}_{m,k}) - 2k^2m.
\end{equation}

By a standard rebalancing argument we can show that the classes $U_i$ are not too small. Indeed, suppose that some $U_i$ has less than $\frac{m}{2k}$ vertices.
 Moving a vertex from a class of size $t$ to a class of size $s$ where $t \geq s+2$ increases the number of arcs by $t-s-1$. 
As long as there is a class of size $s\leq\frac{3m}{4k}$, then there is a class of size $t \geq \frac{m}{k}+1$. 
 Repeatedly move a vertex from a largest class to $U_i$ until $U_i$ has at least $\frac{3m}{4k}$ vertices.
 Each vertex move increases the number of arcs by at least $\frac{m}{k}+1 -\frac{3m}{4k}-1 \geq \frac{m}{4k}$, and at least $\frac{m}{4k}$ vertices are moved in this way for an increase of at least $\frac{m^2}{16k^2}$ arcs. Therefore, as $\overrightarrow{T}_{m,k}$ has the maximum number of arcs among all such transitive complete $k$-partite digraphs on $m$ vertices,
\[
a(\overrightarrow{T}_{m,k}) - a(H) \geq \frac{m^2}{16k^2}.
\]
However, by \eqref{Uprime}, $a(\overrightarrow{T}_{m,k}) - a(H) < 2k^2m$ which yields $2k^2m > \frac{m^2}{16k^2}$, contradicting $m \geq 32k^4$.
Therefore, $|U_i| \geq \frac{m}{2k}$ for all $i$.

Define $F_i \subseteq U_i$ as follows. Let $F_1 = U_1$ and for $i \geq 2$, let $F_i$ be the vertices $y \in U_i$ such that there exists a directed path $y_1y_2\cdots y_{i-1}y$ with $y_j \in U_j$ for each $j=1,\dots, i-1$.

\begin{claim}\label{F-nonempty}
    $|F_i| \geq \frac{m}{4k}$.
\end{claim}

\begin{proofc}
We proceed by induction on $i$. For $i=1$, this is immediate as 
$|F_1| = |U_1| \geq \frac{m}{2k}$.
Now assume $|F_{i-1}| \geq \frac{m}{4k}$.

For $y \in U_i \setminus F_i$, no vertex of $F_{i-1}$ sends an arc to $y$, otherwise $y \in F_i$. On the other hand, in $H$, every arc from $F_{i-1} \subseteq U_{i-1}$ to $U_i$ is present. Therefore, by \eqref{Uprime},
$|F_{i-1}||U_i \setminus F_i| \leq 2k^2m$. Rearranging and applying induction gives
\[
|U_i \setminus F_i| \leq \frac{2k^2m}{|F_{i-1}|}\leq \frac{2k^2m}{m/(4k)} = 8k^3.
\]
Therefore, using $m \geq 32k^4$, we have
\[
|F_i| = |U_i| - |U_i \setminus F_i| \geq \frac{m}{2k} - 8k^3 \geq \frac{m}{4k}.
\]
\end{proofc}

As a consequence of Claim~\ref{F-nonempty}, each $F_i$ is non-empty.
Now, consider a vertex $y \in F_i$. By the definition of $F_i$, there is a path $y_1y_2\cdots y_{i-1}y$ in $U$. If $yx_j$ is an arc for some $j \leq i$, then $y_1y_2\cdots y_{i-1}yx_jx_{j+1}\cdots x_{k}$ is a directed path with $i+1 + (k-1-j) = k+i-j \geq k$ arcs, a contradiction. Therefore, the $k-i$ out-neighbors of $y$ in $P$ are exactly $\{x_{i+1},\dots, x_{k}\}$. Now no $x_\ell y$ for $\ell=i,\dots, k-1$ can be an arc since this would create a $\overrightarrow{P}_{k+1}$, so the $i-1$ in-neighbors of $y$ in $P$ are exactly $\{x_1,\dots, x_{i-1}\}$ for $i\geq 2$.

\begin{figure}[ht]
\centering

\centering
\begin{tikzpicture}[
    scale=0.95,
    v/.style={circle,draw,fill=white,minimum size=5pt,inner sep=0pt},
    arr/.style={-{Latex[length=2mm]},thick},
    bad/.style={-{Latex[length=2mm]},thick,red},
    lab/.style={font=\small}
]
    \node[v,label=above:{$x_1$}] (x0) at (0,2) {};
    \node[lab] (d1) at (1.2,2) {$\cdots$};
    \node[v,label=above:{$x_j$}] (xj) at (2.4,2) {};
    \node[v,label=above:{$x_{j+1}$}] (xjp1) at (3.4,2) {};
    \node[lab] (d2) at (4.6,2) {$\cdots$};
    \node[v,label=above:{$x_{i-1}$}] (xim1) at (5.8,2) {};
    \node[v,label=above:{$x_i$}] (xi) at (6.8,2) {};
    \node[v,label=above:{$x_{i+1}$}] (xip1) at (7.8,2) {};
    \node[lab] (d3) at (9,2) {$\cdots$};
    \node[v,label=above:{$x_{k}$}] (xk) at (10.2,2) {};

    \node[v,label=below:{$y_1$}] (y0) at (0,0) {};
    \node[lab] (e1) at (2.4,0) {$\cdots$};
    \node[v,label=below:{$y_{i-2}$}] (yim2) at (4.8,0) {};
    \node[v,label=below:{$y$}] (y) at (5.8,0) {};

    \node[v,label=below:{$z$}] (z) at (6.8,0) {};

    \draw[arr] (x0) -- (d1);
    \draw[arr] (d1) -- (xj);
    \draw[arr] (xj) -- (xjp1);
    \draw[arr] (xjp1) -- (d2);
    \draw[arr] (d2) -- (xim1);
    \draw[arr] (xim1) -- (xi);
    \draw[arr] (xi) -- (xip1);
    \draw[arr] (xip1) -- (d3);
    \draw[arr] (d3) -- (xk);

    \draw[arr] (y0) -- (e1);
    \draw[arr] (e1) -- (yim2);
    \draw[arr] (yim2) -- (y);

\draw[arr,bend right=18] (y) to (xi);
    \draw[arr,bend right=18]  (xim1) to (z);
    \draw[arr,bend right=18]  (z) to (xip1);
    
    \draw[bad,dash pattern=on 0.6pt off 1pt,bend right=44] (xi) to (xj);

\end{tikzpicture}

\caption{A directed path containing a $\overrightarrow{P}_{k+1}$, using arc $x_ix_j$ in the proof of Claim~\ref{no-back}.}\label{back-fig}
\end{figure}

We now rule out ``backward'' arcs in $P$.

\begin{claim}\label{no-back}
    There is no arc $x_ix_j$ with $i>j$ in $P$.
\end{claim}

\begin{proofc}
    Suppose such an arc $x_ix_j$ is present (see Figure~\ref{back-fig}). Choose $y \in F_{i-1}$ and $z \in F_i$. By definition,
there is a directed path $y_1y_2\dots y_{i-2}y$ in $U$. Moreover, there are arcs $yx_i$ and $x_{i-1}z$, and, if $i \leq k-1$, there is also arc $zx_{i+1}$. 
Now consider the directed path \[
y_1y_2\dots y_{i-2}yx_ix_jx_{j+1}\cdots x_{i-1}zx_{i+1}\cdots x_{k}
\] 
where the final segment of the path after $z$ is empty if $i=k$. This path has $(i-1)+1+(i-j)+1+(k-i)=k+i-j+1\geq k+1$ vertices, a contradiction.
\end{proofc}

By Claim~\ref{no-back}, $a(G[P]) \leq \binom{k}{2}$ and thus $c \leq 0$. Since we are in the case $c \geq |C| \geq 0$, it follows that $c=|C|=0$ and $a(G[P]) = \binom{k}{2}$. 
Therefore, we obtain both that $G[P]$ is a transitive tournament and that $U'=U$. 
By \eqref{compare-ineq}, we have $a(G) - a(G[U]) \leq a(\overrightarrow{T}_{n,k}) - a(\overrightarrow{T}_{n-k,k})$. Since $G$ is extremal, $a(G) \geq a(\overrightarrow{T}_{n,k})$ and so $a(G[U]) \geq a(\overrightarrow{T}_{n-k,k})$. Moreover, the discussion after Claim~\ref{trans-classes} implies that $G[U]$ is a subdigraph of the transitive complete $k$-partite $H$. Therefore, $G[U] = H = \overrightarrow{T}_{n-k,k}$.
Now it is straightforward to check, using the definition of the classes $U_i$ and that $G$ is $\overrightarrow{P}_{k+1}$-free, that for all $i$, we may put $V_i := U_i \cup \{x_i\}$ and obtain that $G$ is a transitive Tur\'an digraph $\overrightarrow{T}_{n,k}$ with classes $V_1,\dots, V_k$.
\end{proof}

We are now ready to prove the main theorem.

\begin{proof}[Proof of Theorem~\ref{main-thm}.]
The case $k=1$ is trivial, so assume $k \geq 2$.
    Let $\mathrm{EX}(n,\overrightarrow{P}_{k+1})$ be the family of extremal digraphs for $\overrightarrow{P}_{k+1}$.
    For $G \in \mathrm{EX}(n,\overrightarrow{P}_{k+1})$, define
    \[
    f(G) :=  a(G) - a(\overrightarrow{T}_{n,k}).
    \]
    Since $\overrightarrow{T}_{n,k}$ is $\overrightarrow{P}_{k+1}$-free, we have $f(G) \geq 0$ for every $G \in \mathrm{EX}(n,\overrightarrow{P}_{k+1})$.

    Set $N=33k^4$ and let $m \geq N$.
    If $H \in \EX(m,\overrightarrow{P}_{k+1})$ is not a transitive Tur\'an digraph $\overrightarrow{T}_{m,k}$, then
    Lemma~\ref{main-lemma} implies that $H$ contains a directed path $\overrightarrow{P}_{k}$ on vertex set $P$ such that
    \[
    a(H)-a(H-P) < a(\overrightarrow{T}_{m,k}) - a(\overrightarrow{T}_{m-k,k}).
    \]
    Let $H' \in \mathrm{EX}(m-k,\overrightarrow{P}_{k+1})$. Since $H-P$ is still $\overrightarrow{P}_{k+1}$-free, we have $a(H') \geq a(H-P)$.
    Therefore,
    \begin{align*}
        f(H') & = a(H')-a(\overrightarrow{T}_{m-k,k}) \\
        &\geq a(H-P) -a(\overrightarrow{T}_{m-k,k}) \\
        & > a(H)  -a(\overrightarrow{T}_{m,k}) \\
        &=f(H). 
    \end{align*}
    In particular, $f(H') \geq f(H)+1$ and, moreover, $H'$ cannot be a transitive Tur\'an digraph $\overrightarrow{T}_{m-k,k}$.

Now consider all extremal digraphs on $m$ vertices where $N \leq m < N+k$. 
Let $M$ be the maximum value of $f(H)$ over all such digraphs $H$, i.e.,
\[
M:= \max \left\{f(H) \mid H \in \EX(m,\overrightarrow{P}_{k+1}) \text{ for } N \leq m < N+k \right\}.
\]
If $H$ has $m$ vertices, then Lemma~\ref{bad-bound} implies $f(H) = a(H) - a(\overrightarrow{T}_{m,k}) = a(H) - e(T_{m,k}) \leq \frac{k}{2}m$. Therefore, $M <  \frac{k}{2}(N+k) \leq kN \leq 33k^5$.

Now, suppose for a contradiction, that for $n \geq 50k^6$ there is a digraph $G \in \EX(n,\overrightarrow{P}_{k+1})$ that is not a transitive Tur\'an digraph $\overrightarrow{T}_{n,k}$.
Repeatedly apply the step above by replacing an extremal digraph $G$ with an extremal digraph $G'$ on $k$ fewer vertices until $G'$ has at least $N$ and fewer than $N+k$ vertices. At each step $f(\cdot)$ increases by at least $1$ and the number of steps is at least $\lfloor \frac{n-N}{k} \rfloor \geq 50k^5-33k^3-1 > 33k^5 \geq M$ for $k\geq 2$. This yields $G'$ on at least $N$ and less than $N+k$ vertices
with $f(G') > M$, a contradiction.
Therefore, every extremal digraph on $n \geq 50k^6$ vertices must be a transitive Tur\'an digraph $\overrightarrow{T}_{n,k}$.
\end{proof}

\bibliography{ref}
\bibliographystyle{plainurl}

\end{document}